\newtheorem{thm}{Theorem}[section]
\theoremstyle{definition}
\newtheorem{defx}[thm]{Definition}
\newtheorem{rem}[thm]{Remark}
\numberwithin{equation}{section}
\newcommand{\D}{I\!\!D}
\newcommand{\be}{\begin{enumerate}}
\newcommand{\ee}{\end{enumerate}}
\newcommand{\bq}{\begin{eqnarray*}}
\newcommand{\eq}{\end{eqnarray*}}
\begin{document}
\pagenumbering{arabic} \baselineskip 10pt
\newcommand{\disp}{\displaystyle}
\renewcommand*\contentsname{Table of Contents}
\thispagestyle{empty}
\newcommand{\HRule}{\rule{\linewidth}{0.1mm}}
\linespread{1.0}
\pagenumbering{arabic} \baselineskip 12pt
\thispagestyle{empty}
\title{\textbf{Quaternionic Spherical Fourier Multipliers for Gelfand Pairs}}
\author{O. A. Ariyo$^1$  and  M. E. Egwe$^{2*}$\\ Department of Mathematics, University of Ibadan, Ibadan, Nigeria.\\
$^1$\emph{oa.ariyo@ui.edu.ng}\;\;$^2$\emph{murphy.egwe@ui.edu.ng}}
\maketitle
\large	
\begin{abstract}
Let $\mathbb{H}_q$ denote the quaternionic Heisenberg group with $\mathbb{R}^4\times\mathbb{R}^3$ stratification and $K$ a compact subgroup of Automorphism. We construct the spherical Fourier multiplier related to a Gelfand Pairs on the Quaternionic Heisenberg group $\mathbb{H}_q$ via the motion group $G=K\ltimes \mathbb{H}_q$ of $\mathbb{H}_q$, where $K\ltimes \mathbb{H}_q$ is the semidirect product of $\mathbb{H}_q$ and its compact subgroup of automorphism $K\in Aut(\mathbb{H}_q)$.
\end{abstract}

\small \textbf{Keywords:} Quaternionic Heisenberg Group, Spherical Fourier multiplier, Semidirect Product, Gelfand Pairs.
\small\textbf{Mathematics Subject Classification (2020):}20F18, 22E25, 42A45, 42B15, 43A22\\
\ \\
\section{Introduction}
The construction of spherical Fourier multipliers on Quaternionic Heisenberg group is an interesting study and this can be considered in connection to the Gelfand Pairs defined on the group. The Quaternionic Heisenberg group is an example of Heisenberg group of $H$-type. This group was introduced by A. Kaplan and F. Ricci \cite{kaplan2006harmonic} whose background has provided a framework for several analysis on stratified Lie groups since its inception \cite{chang2006geometric}, \cite{yang2022harmonic}. A quaternion is the quadruple $(1, i,j,k)$ and  its quadrinomial form is written as $q=q_0+iq_1+jq_2+kq_3;~~q\in \mathbb{R}$, where $i^2=j^2=k^2=-1=ijk$. The pure quaternion is the triple $(i,j,k)$. Any unit quaternion is a quaternion with norm $1$ and it's given as $\disp{\hat{q}=\frac{q}{||q||}}$.\\
It is easy to see that $\hat{q}^{-1} =\hat{q}^*$, where $q^*$ denotes the conjugate of $q$. A unit quaternion in matrix form is an orthogonal matrix such that $\disp{A^{-1}=A^*=A^T}$. Any rotation matrix $A$ is an orthogonal matrix with its usual representation.\\
The set of quaternions together with the operations of addition and multiplication satisfies the condition for a ring and hence the triple $(\mathbb{H},+,\cdot)$ is a non-commutative ring since $ij\ne ji, jk\ne kj$ and $ki\ne ik$. The construction of Gelfand Pairs via the motion group $K\ltimes \mathbb{H}_q$ (\cite{faress2020spherical}, \cite{wolf2007harmonic}) of semidirect product of $\mathbb{H}_q$ and $K$ its compact subgroup of automorphism allows the analysis of spherical Fourier multipliers on $\mathbb{H}_q$; i.e. extension of commutative analysis on noncommutative groups.\\
Several analysis on Heisenberg and Quaternionic Heisenberg Groups such as the Gelfand Pairs, spherical functions, spherical-radial multipliers on Heisenberg group and some other harmonic analysis are well known \cite{egwe2014k},\cite{egwe2020spherical},\cite{faress2020spherical}. Here, we consider a construction of spherical Fourier multipliers on the Quaternionic Heisenberg group in the sense of (\cite{egwe2020spherical}, \cite{mensah2024spherical}).\\
In classical abstract harmonic analysis, the theory of Fourier multipliers on a locally compact group $G$ play some crucial roles yielding several results and applications \cite{de1979lp}, \cite{folland2016course}. Multipliers for groups are defined in various ways \cite{larsen2012introduction}, they originated from the study of harmonic analysis of summability of Fourier series of some periodic integrable functions $f\in G$ (\cite{larsen2012introduction}, \cite{ruzhansky2015p}).\\
The concept of multipliers on the Heisenberg group was first introduced by Mauceri et al \cite{mauceri2006harmonic}. Other authors such as F. Ricci and R.L. Rubin \cite{ricci1986transferring},  considered some analysis of Fourier Multipliers and transfer of results from $SU(2)$ onto the Heisenberg group, M.E. Egwe did results on spherical-radial multipliers on the Heisenberg group and some constructions of spherical functions \cite{egwe2020spherical}.\\
Gelfand Pairs on the Heisenberg group $\mathbb{H}_n$, their associated properties and relevance cannot be overemphasized, these have been studied by several authors \cite{gelfand1988collected}, \cite{kaplan2006harmonic} among others. Benson, Jenkins and Ratclif \cite{benson1997geometric} did analysis of Gelfand Pairs on the Heisenberg $\mathbb{H}_n$ of dimension $(2n+1)$. They considered the pair $(K,\mathbb{H}_n)$ as a Gelfand Pair for $K$ a compact subgroup of automorphism $K\subset Aut(\mathbb{H}_n)$. It was clear that $(K, \mathbb{H}_n)$ forms Gelfand Pair if the algebra $L^1_K(\mathbb{H}_n)$ of $K$-invariant integrable functions on $\mathbb{H}_n$ is commutative; that is, the commutativity of the algebra of double coset space $L^1(K\backslash \mathbb{H}_n \ltimes K\slash K)$.
\section{Preliminaries}
The multiplier defined on a Banach algebra is a continuous linear operator $\mathfrak{m}: A\longrightarrow A$ such that $$\mathfrak{m}(x)y=x(\mathfrak{m}y);~~\forall~x,y\in A.$$
\begin{defx}
	Let $G$ be a locally compact abelian group, $L^1(G)\longrightarrow L^1(G)$ be a continuous linear transformation and $M(G)$ denotes the Banach space of bounded regular complex valued Borel measures on $G$ with the norm of $\nu\in M(G)$ equals total variation of the measure $\nu$, i.e. $\disp{\|\nu\|=|\nu|}$. We define a multiplier for $L^1(G)$ as a continuous linear operator $\mathfrak{m}$ on $L^1(G)$ satisfying:
	\begin{enumerate}
		\item[($i$)]$\mathfrak{m}$ commutes with translation operators via $\mathfrak{m}\tau_a=\tau_a\mathfrak{m}$, $\forall a\in G$; where $\tau_a$ is a translation operator such that $(\tau_af)x=f(xa^{-1})$.
		\item[($ii$)] $\mathfrak{m}(f*g)=\mathfrak{m}f*g,~~~\forall~~f,g\in L^1(G)$.
		\item[($iii$)] there exist a unique measure $\nu\in M(G)$. For each $f\in L^1(G)$, we have $$\widehat{\mathfrak{m}f}=\hat{\nu}\hat{f}.$$
		\item [($iv$)]$\exists$ a unique measure $\varphi$ on $\hat{G}$ (the dual space of $G$), such that $$\widehat{\mathfrak{m}f}=\hat{\varphi}\hat{f}.$$
		\item[($v$)] for $\nu \in M(G)$, we have $\mathfrak{m} f=f*\nu,~~\forall~~f\in L^1(G)$.
	\end{enumerate}
\end{defx}
\begin{rem}\ \\
	(1) By $(iii)$ and $(iv)$ $\hat{\varphi} =\hat{\nu}$.\\
	(2) By $(i)$, the multipliers behave in a similar way as the intertwinning operator in classical group representation of the Quaternionic Heisenberg group.\\
	(3) The norm on $L^1(G)$ of equivalence classes of integrable complex-valued functions with Haar measure $\lambda$ on $G$ is given by $$\|f\|_1=\int_G|f(x)|d_\lambda(x);~~f\in L^1(G).$$
\end{rem}
\begin{thm}[\cite{larsen2012introduction}, pg 6]\ \\
Let $G$ be a locally compact abelian group. Then the space of multipliers for $L^1(G)$ is isometric isomorphic to $M(G)$.
\end{thm}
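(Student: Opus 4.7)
The plan is to construct the isometric isomorphism explicitly in one direction and then invert it using an approximate identity together with weak-$*$ compactness on $M(G)$. Throughout, I would fix Haar measure $\lambda$ on $G$, view $L^1(G)$ as a closed subspace of $M(G)$ via $f\mapsto f\,d\lambda$, and use the identification $M(G)\cong C_0(G)^{*}$ provided by the Riesz representation theorem.

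The first step is to go from $M(G)$ into the multiplier algebra. For $\nu\in M(G)$, define $\mathfrak{m}_\nu: L^1(G)\to L^1(G)$ by $\mathfrak{m}_\nu f = f*\nu$. Young's inequality yields $\|f*\nu\|_1\le \|f\|_1\|\nu\|$, so $\mathfrak{m}_\nu$ is a bounded linear map with $\|\mathfrak{m}_\nu\|\le\|\nu\|$. Associativity and commutativity of convolution (using abelianness of $G$) show that $\mathfrak{m}_\nu(f*g)=(f*\nu)*g = f*(\mathfrak{m}_\nu g)$, and the map $\nu\mapsto\mathfrak{m}_\nu$ is clearly linear. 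This gives a norm-decreasing linear embedding $\Psi: M(G)\to\mathrm{Mult}(L^1(G))$, and condition $(v)$ of Definition~1 is satisfied by construction.

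The heart of the argument is the reverse direction: given an abstract multiplier $\mathfrak{m}$, produce $\nu$. I would fix a bounded approximate identity $(e_\alpha)_{\alpha\in A}\subset L^1(G)$ with $\|e_\alpha\|_1\le 1$ and consider the net of measures $\mu_\alpha := \mathfrak{m}(e_\alpha)\,d\lambda \in M(G)$. Since $\|\mu_\alpha\|\le\|\mathfrak{m}\|\,\|e_\alpha\|_1\le\|\mathfrak{m}\|$, the net lies in a norm-bounded subset of $M(G)=C_0(G)^{*}$, so by the Banach--Alaoglu theorem it has a weak-$*$ convergent subnet $\mu_{\alpha'}\to \nu$ with $\|\nu\|\le\|\mathfrak{m}\|$. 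To show $\mathfrak{m}f = f*\nu$ for each $f\in L^1(G)$, I would use property $(ii)$ of Definition~1 and the approximate-identity property to write
\begin{equation*}
\mathfrak{m}(f) \;=\; \lim_{\alpha'} \mathfrak{m}(f*e_{\alpha'}) \;=\; \lim_{\alpha'} f*\mathfrak{m}(e_{\alpha'}) \;=\; \lim_{\alpha'} f*\mu_{\alpha'},
\end{equation*}
where the first limit is in $L^1$-norm (hence in $M(G)$-norm, hence weak-$*$). For the remaining weak-$*$ limit I would use the standard duality identity $\langle f*\mu,g\rangle = \langle \mu, \tilde f * g\rangle$ for $g\in C_0(G)$, where $\tilde f(x)=f(x^{-1})$ and $\tilde f * g\in C_0(G)$; this transfers weak-$*$ convergence of $\mu_{\alpha'}$ to weak-$*$ convergence of $f*\mu_{\alpha'}$ to $f*\nu$. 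Uniqueness of weak-$*$ limits then yields $\mathfrak{m}f = f*\nu$ in $M(G)$, and since the left side lies in $L^1(G)$, so does $f*\nu$.

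Finally, combining $\|\nu\|\le\|\mathfrak{m}\|$ from the construction with $\|\mathfrak{m}\|=\|\mathfrak{m}_\nu\|\le\|\nu\|$ from Step 1 gives $\|\mathfrak{m}\|=\|\nu\|$, so $\Psi$ is a surjective isometry. Injectivity of $\Psi$ follows from the same identity: if $f*\nu=0$ for every $f\in L^1(G)$, then evaluating against an approximate identity shows $\nu=0$. I expect the main technical obstacle to be the passage to the weak-$*$ limit in the identity $\mathfrak{m}(f)=\lim f*\mu_{\alpha'}$, since one must verify that the convolution operation $\mu\mapsto f*\mu$ is weak-$*$ continuous on norm-bounded sets; the duality with $C_0(G)$ sketched above is what makes this work and it uses abelianness only mildly (to identify $\tilde f * g$ with an element of $C_0(G)$ via the standard translation-invariance estimates).
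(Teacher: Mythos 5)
The paper does not actually prove this theorem: it is imported verbatim from Larsen's book (the citation ``pg 6'' is the proof), so there is no in-paper argument to compare yours against. Judged on its own, your proposal is the standard Wendel--Larsen argument and it is essentially correct: the forward map $\nu\mapsto(f\mapsto f*\nu)$ with the Young-type bound $\|f*\nu\|_1\le\|f\|_1\|\nu\|$, and the converse via a bounded approximate identity, Banach--Alaoglu on $M(G)\cong C_0(G)^*$, weak-$*$ lower semicontinuity of the norm to get $\|\nu\|\le\|\mathfrak m\|$, and the duality identity $\langle f*\mu,g\rangle=\langle\mu,\tilde f*g\rangle$ with $\tilde f*g\in C_0(G)$ to pass the limit through the convolution. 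The isometry then closes correctly because injectivity (tested against the approximate identity) forces the measure produced by the converse construction applied to $\mathfrak m_\nu$ to be $\nu$ itself, giving $\|\nu\|\le\|\mathfrak m_\nu\|\le\|\nu\|$. Two small points worth making explicit if you write this up: (1) the paper's Definition 2.1 lists all five equivalent characterizations as if they were simultaneous axioms, and under that reading condition $(v)$ already hands you the measure, making the theorem circular; your proof correctly takes only $(ii)$ (equivalently $(i)$) as the definition, which is the honest statement of the result. (2) You should say a word about why $\mathfrak m f$, a priori only known to equal the measure $f*\nu$, is consistent as an element of $L^1$: the weak-$*$ limit identifies two measures, one of which is absolutely continuous by hypothesis, so the identification in $L^1$ follows. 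Neither point is a gap in substance.
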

\begin{rem}
This theorem, though defined here for abelian groups $G$, is easily extended to non-commutative groups such as the stratified (Quaternionic) Heisenberg groups since the motion group structure on the quaternionic Heisenberg group $\mathbb{H}_q$ and the Banach algebra $L^1(K\backslash \mathbb{H}_q\ltimes K/K)$ of double coset is commutative.
\end{rem}

\begin{defx}\cite{wolf2007harmonic}
Let $G$ be a locally compact group and $K$ a compact subgroup of $G$. A function $f: G \longrightarrow \mathbb{C}$ is said to be $K$-bi-invariant with respect to $K$ if it is constant on double cosets of $K$, i.e. if $$f(k_1uk_2)=f(u)~~~~~~~~~~\forall k_1,k_2\in K, \forall u\in G.$$
The collection of all $K$-bi-invariant functions on with compact support in $G$ is denoted by $C_c^\#(G)$. Hence, given any $f\in C_c(G)$, the projection $C_c(G)\longrightarrow C_c(K\backslash G/K)$ defined by
\begin{equation}
	f^{\#}(u) = \int_K\int_Kf(kuk^\prime)dkdk^\prime
	\end{equation}
	is in $C_c^{\#}(G)$.
\end{defx}
\begin{defx}
Let $\varphi$ be any complex-valued continuous function on $G=\mathbb{H}_q\ltimes K$, such that $\varphi\ne 0$, then $\varphi$ is spherical if and only if $$\int_{K}\varphi(ukv)dk=\varphi(u)\varphi(v)$$
\end{defx}
\section{The Quaternionic Heisenberg Group $(\mathbb{H}_q)$}
Quaternionic Heisenberg group like its Heisenberg counterpart plays core role in abstract harmonic analysis, the representation theory, analysis of several complex variables, the partial differential equations and quantum mechanics. It is a stratified Lie group with the underlying manifold structure $\mathbb{H}_q=\mathbb{H}\oplus \mathbb{R}^3\approx \mathbb{R}^4\times \mathbb{R}^3$  and multiplication given by \[(u,v)(r,s)=(u+r, v+s-2\Im(\bar{r},u))\]  $\forall~~u,r\in\mathbb{R}^4$ and $v,s\in\mathbb{R}^3$.\\
The centre of quaternionic Heisenberg group $\mathbb{H}_q$ is $\mathbb{R}^3=[\mathbb{H}_q,\mathbb{H}_q]$, and the Haar measure on $\mathbb{H}_q$ is the $dudt, ~\hbox{for}~u\in\mathbb{R}^4 ~\hbox{and} ~t\in\mathbb{R}^3$. The Haar measure on $G=\mathbb{H}_q\ltimes K$ is $dudtdk$ where $dk$ is a Haar measure on $K$.
\begin{defx}
Let $G$ be a locally compact group and $K$ its compact subgroup of automorphism. The semidirect product of $G=\mathbb{H}_q\ltimes K$ on $\mathbb{H}_q$ is defined by \[(k,u,t)\cdot(k^\prime,u^\prime,t^\prime)=(k\cdot k^\prime, (u,t)(k\cdot u^\prime,t^\prime))\] for $k,k^\prime \in K, ~u,u^\prime \in \mathbb{R}^4 ~\hbox{and}~t,t^\prime \in \mathbb{R}^3$.
\end{defx}
Of course, we can show that $K$ is an invariant subgroup of $G=\mathbb{H}_q\ltimes K$, i.e. $kGk^{-1}\in K$ or $KGK^{-1}=K$.\\
We know that every element of $G=\mathbb{H}_q\ltimes K$ can be written as a product of an element of $K$ and $\mathbb{H}_q$ such that $\mathbb{H}_q\cap K=\{e\}$. In fact, the transitive action of $G$ on $\mathbb{H}_q$ given as
\begin{eqnarray*}
&\Psi: G\times \mathbb{H}_q\longrightarrow \mathbb{H}_q\\
&((k,u,t), (v,s))\mapsto (u,t) \cdot (k\cdot v, s),~~~k\in K, ~u \in \mathbb{R}^4 ~\hbox{and}~s\in \mathbb{R}^3;
\end{eqnarray*}

is a transitive action of $G$ on $\mathbb{H}_q$ and identifies the quotient $G\slash K$ with $\mathbb{H}_q$, i.e. $G\slash K \simeq \mathbb{H}_q$ with $K$ the stabilizer of $(0,0)$.\\
To see this, it suffices to show that $xkx^{-1}\in K~\forall ~x\in G, k\in K$. By extension, $xKx^{-1}=K \forall x\in G$.\\
Now $xk\in xK=Kx \implies xk=kx$ for some $k\in K$, then $xkx^{-1}=k\in K$. Now suppose $xk\in xK$, then $xkx^{-1}\in K$, hence $xk=xkx^{-1}x\in Kx$. Thus, $xK\subseteq Kx$. Furthermore $kx=x(x^{-1}kx)\in xK$, showing that $xK=Kx$.\\
Therefore, $xKx^{-1}=K$ because $xkx^{-1}\in K$ and $x^{-1}k(x^{-1})^{-1}\in K, \forall x\in G$ and $k\in K$. This establishes the existence of the semidirect product structure of the motion group $G=\mathbb{H}_q\ltimes K$.\\
The action by automorphism of the group $\mathbb{H}_q$ is described as follows:\\
Let $(u,t)\in\mathbb{H}_q$ and $k\in K$, we define $\varphi_k\in \mathbb{H}_q$ by
\begin{eqnarray*}
	\varphi_k: &\mathbb{H}{q}&\longrightarrow Aut(\mathbb{H}_q)\\	
	&(u,t)&\longmapsto \varphi_k(u,t) =(k\cdot u, t)
\end{eqnarray*}
$\varphi_k$ is a bijection, for if $(u,t), (v,s)\in\mathbb{H}_q,~k\in K$,
\begin{eqnarray*}
\varphi_k(u,t)\cdot\varphi_k(v,s)&=&\left(k\cdot u+k\cdot v, t+s-2\Im(\overline{k\cdot v}\cdot k\cdot u)\right)\\
&=&\left(k\cdot u+k\cdot v, t+s-2\Im(\bar{v}\cdot\bar{k}\cdot k\cdot u)\right)\\
&=&\left(k\cdot (u+v), t+s-2\Im(\bar{v}\cdot u)\right)\\
&=&\varphi_k\left((u,t)\cdot(v,s)\right).
\end{eqnarray*}

\section{Positive Definite Function on Quaternionic Heisenberg Group}
Let $\mathbb{H}_q=\mathbb{H}\oplus \mathbb{R}^3$ be a stratified quaternionic Heisenberg group. A function $\varphi: \mathbb{H}_q\longrightarrow \mathbb{C}$ is positive definite if the inequality
\begin{equation}\label{4.1} \sum_{i=1}^{n}\sum_{j=1}^{n}\varphi\left((u,t)_i(u,t)^{-1}_j\right)z_i\bar{z_j}\ge 0
 \end{equation} holds.\\
We note that $z_i z_j=z_i\bar{z_j},~~iff~~z_i, z_j \in \mathbb{R}$. \eqref{4.1} is true since $\mathbb{H}_q=\mathbb{R}^4\times \mathbb{R}^3,~\forall~n\in \mathbb{N},~(u,t)_1, (u,t)_2,\cdots,(u,t)_i\in\mathbb{H}_q$ and $z_1,\cdots,z_j\in\mathbb{C}$.\\
For a continuous function $\varphi:\mathbb{H}_q\longrightarrow \mathbb{C}$, we define equivalently, the positive definite function on $\mathbb{H}_q$ by;
$$\int\int_{\mathbb{H}_q}f\left((u,t)(v,t)^{-1}\right)\overline{g (u,t)}\cdot g(v,t)dudv\ge 0$$
for each of the functions $f,g\in L^1(\mathbb{H}_q)$.
The following results hold true for positive definiteness:
\begin{enumerate}
	\item [(i)] $|\varphi(u,t)|\le \varphi(e,\mathbbm{1}), ~~e\in\mathbb{H}\simeq\mathbb{R}^4, ~\mathbbm{1}\in \mathbb{R}^3$.
	\item[(ii)] $\varphi\left((u,t)^{-1}\right)=\overline{\varphi(u,t)};~x\in \mathbb{R}^4,~t\in\mathbb{R}^3.$
\end{enumerate}
Let $f:\mathbb{H}_q\longrightarrow \mathbb{C}$ be a $K$-invariant function on $\mathbb{H}_q$, i.e. $f(k\cdot(u,t))=f(k\cdot u,t)=f(u,t),~\forall~k\in K, (u,t)\in\mathbb{H}_q$, then the function $f(\cdot,t)$ defined by $$f(u,t)=f(|u|,t)=f\bigg(\frac{u}{|u|}\cdot |u|, t\bigg);$$ is a radial function on $\mathbb{H}_q$.
\section{Fourier Multipliers}
A Fourier multiplier is a linear operator that tends to alter the Fourier transform of a given function by multiplication. Some examples of multipliers are translations, differential operators, Hilbert transforms, etc.\\
Let $\mathfrak{F}_\lambda$ be the representation space of holomorphic functions such that $\disp{\|f\|_2=:\int|f|^2~d\mu <\infty,~~f \in L^2(\mathbb{H}_q)}$. The Fourier transform associated to an integrable and square integrable function $f$ on $\mathbb{H}_q$ is given by:
\begin{equation}\label{7.1}
	\hat{f}(\lambda):=\int_{\mathbb{H}_q} f(u,t)\pi_\lambda(u,t) du dt
	\end{equation}
	where $\lambda\in\Im \mathbb{H}\setminus\{0\},\lambda\ne 0,~(u,t)\in\mathbb{H}_q$ and $\pi_\lambda(u,t)$ is the irreducible unitary representation of $\mathbb{H}_q$ in the Fock's space $\mathfrak{F}_\lambda$ on $\mathbb{H}_q$ such that $\disp{J_\lambda\mapsto u\cdot \frac{\lambda}{|\lambda|}}$ defines a complex structure on $\mathbb{H}$.\\
	The action of the representation $\pi_\lambda(u,t)$ is described by;
	$$\pi_\lambda(u,t)f(q):=e^{i\langle \lambda,t\rangle}e^{-|\lambda|(|u|^2+2\langle q,u\rangle-2i\langle q\cdot\frac{\lambda}{|\lambda|},u\rangle}f(q+u).$$\\
	The Fourier Transform \eqref{7.1} acts on every function $f\in L^1(\mathbb{H}_q)$.\\
	Now suppose we have $\varphi, \psi\in\mathfrak{F}_\lambda$, a unique inner product defined on the Fock's space by the action of the Fourier transform is given as
	$$\langle\hat{f}(\lambda)\varphi,\psi\rangle=\int_{\mathbb{H}_q}f(u,t)\langle \pi_\lambda(u,t)\varphi, \psi\rangle dxdt, ~f\in L^2\mathbb{H}_q$$ such that $$|\langle\hat{f}(\lambda)\varphi, \psi\rangle|\le \|\varphi\|~\|\psi\|~\|f\|_1$$  implying $$|\langle \pi_\lambda(u,t)\varphi, \psi\rangle|\le \|\varphi\|~\|\psi\|;$$  and $\|\hat{f}(\lambda)\|_\infty\le\|f\|_1$ where $\|f\|_\infty:=\inf \{C\ge 0:|f(x)|\le C, \hbox{for ~almost~all~}x\}$. This clearly shows that $\hat{f}(\lambda)$ is a bounded operator on the Fock's space $\mathfrak{F}_\lambda$ and an Hilbert-Schmidt operator.
	
	\section{Spherical Fourier Transform}
	\begin{defx}
		Let $f$ be an integrable radial function on the quaternionic Heisenberg group $\mathbb{H}_q$ and $\mathcal{P}_n$, the vector space of all homogeneous polynomials of degree $n$. Let $\varphi\in \mathcal{P}_n$.\\
		Define $$\varphi_{\lambda,n}(u,t):=\frac{1}{n+1}e^{i\langle \lambda ,t\rangle}e^{-|\lambda|~|u|^2}\mathcal{L}^1_n(2|\lambda|~|u|^2);$$ as a bounded positive definite function on $\mathbb{H}_q$, where;
		$$\mathcal{L}^1_n(w)=e^{-w\slash 2}\left(\mathcal{L}^1_n(w) \big{\slash}\mathcal{L}_n^1(0)\right)$$ is the Laguerre function and $f(u,t)=F(|u|^2,t)$ a radial function, then the radial-spherical Fourier Transform is defined by,
		\begin{equation}
		\hat{f}(\lambda,n)=\int_{\mathbb{H}_q}f(u,t)\varphi_{\lambda, n}(u,t)du dt
		\end{equation}
	\end{defx}
This is the same as writing $$\hat{f}(\lambda, n)=\pi^2\int_{\mathbb{R}^3}\left( \int_0^\infty F(\alpha,t)\mathcal{L}_n^1(2|\lambda|\alpha~d\alpha)\right) e^{i\langle \lambda,t\rangle} dt$$
The following equalities are obvious.
\begin{enumerate}
	\item [i.] $\widehat{f*g}(\lambda,n)=\hat{f}(\lambda,n)\hat{g}(\lambda,n).$
	\item [ii.] $\widehat{L_{(u,t)}f}(\lambda,n)=\varphi_{\lambda,n}(-u,-t)\hat{f}(\lambda,n);~~~\hbox{where}~L_a f(w)=f(a^{-1}\cdot w)$.
	\item [iii.] $\widehat{R_{(u,t)}f}(\lambda,n)=\varphi_{\lambda,n}(u,t)\hat{f}(\lambda,n);~~~\hbox{where}~R_a f(w)=f(w\cdot a)$.
\end{enumerate}

\section{Main Result}
\subsection{Spherical Fourier Multipliers of Gelfand Pairs on $\mathbb{H}_q$}
	Recall the properties of the bounded Spherical function $\varphi_{\lambda, m}$ in the construction of the Spherical Fourier Transform. We now give the definition of Spherical Fourier Multiplier related to Gelfand pair on $\mathbb{H}_q$.\\
	Let $\mathfrak{m}:K\longrightarrow Aut(\mathbb{H}{q})$ be such that $k\longmapsto \mathfrak{m}_k$ is a morphism of any compact subgroup of automorphism $Aut(\mathbb{H}{q})$  of $\mathbb{H}{q}$, then the spherical Fourier Multiplier  related to Gelfand Pair on $\mathbb{H}_q$ is defined by:
	\begin{eqnarray*} T_{\mathfrak{m}}f(k,u,t)&:=&\int_K\mathfrak{m}(\varphi_{\lambda,n})\hat{f}(\varphi_{\lambda,n})\varphi_{\lambda,n}(k,u,t)~d\nu(\varphi_{\lambda,n}),~~~(k,u,t)\in\mathbb{H}_q\ltimes K\\
&=&\int_K\mathfrak{m}(\varphi_{\lambda,n})\int_{\mathbb{H}_q} f(u,t)\varphi_{\lambda,n}(u,t)\varphi_{\lambda,n}(k,u,t)~d\nu(u)d\nu(t)d\nu(k)\\
		&=&\int_K\int_{\mathbb{H}_q}\mathfrak{m}(\varphi_{\lambda,n}) f(u,t)\varphi_{\lambda,n}(k\cdot u,t)\varphi_{\lambda,n}(u,t)~d\nu(u)d\nu(t)d\nu(k).~~(by~Fubini~Tonelli's~theorem)
	\end{eqnarray*}
	Whence \begin{equation}\label{9.2}
	T_{\mathfrak{m}}f(k,u,t):=\int_{\mathbb{H}_q\ltimes K}\mathfrak{m}(\varphi_{\lambda,n}) f(u,t)\varphi_{\lambda,n}(k\cdot u,t)\varphi_{\lambda,n}(u,t)~d\nu(u)d\nu(t)d\nu(k)
	\end{equation}
Thus \eqref{9.2} defines a quaternionic Spherical Fourier Multiplier of Gelfand Pairs on $\mathbb{H}_q$.
	\begin{defx}
Given a bounded measurable function $\mathfrak{m}$  on $\mathbb{R}^n$ that acts on bounded positive spherical functions on $\mathbb{H}_q$, the transformation $T_{\mathfrak{m}}f(k,u,t)$ is well characterized by $$\widehat{T_mf}=\mathfrak{m}(\varphi_{\lambda,m})\hat{f}(\varphi_{\lambda,m});~~~f\in L^2(\mathbb{H}_q)$$
	$T_{\mathfrak{m}}$ is a bounded linear operator on $L^2(\mathbb{H}_q)$ by Plancherel Theorem.
	\end{defx}
\begin{rem}
The operator $T_{\mathfrak{m}}$ is called a Fourier multiplier for $L^p(\mathbb{R})$ if extended to $L^p(\mathbb{R})$.
\end{rem}
We observe that $T_{\mathfrak{m}}f(k,u,t)$ adequately satisfies the following results.
\begin{thm}
	Let $f,g,\in L^{1,\#}(\mathbb{H}_q)$, then the following equalities hold:
	\begin{enumerate}
		\item [i.] $T_{\mathfrak{m}}(f*g)=T_{\mathfrak{m}} f*g$.
		\item[ii.] $T_{{\mathfrak{m}}_1}f*T_{{\mathfrak{m}}_2}g=T_{{\mathfrak{m}}_1{\mathfrak{m}}_2}(f*g)$.
	\end{enumerate}
\end{thm}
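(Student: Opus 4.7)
The plan is to prove both identities by passing to the spherical Fourier transform side, where the multiplier action is just pointwise multiplication by $\mathfrak{m}(\varphi_{\lambda,n})$, and then recovering the identity of functions via injectivity of the spherical Fourier transform on $L^{1,\#}(\mathbb{H}_q)$. Injectivity is available here because $(K,\mathbb{H}_q)$ is a Gelfand pair, so $L^{1,\#}(\mathbb{H}_q)$ is a commutative Banach algebra whose Gelfand spectrum is realized by the bounded spherical functions $\varphi_{\lambda,n}$ constructed in Section~6. Throughout, all convolutions stay inside $L^{1,\#}(\mathbb{H}_q)$ since the convolution of two $K$-bi-invariant functions is $K$-bi-invariant, so all spherical transforms are well defined.

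For part (i), I would compute $\widehat{T_{\mathfrak{m}}(f*g)}(\lambda,n)$ using the characterization $\widehat{T_{\mathfrak{m}}h} = \mathfrak{m}(\varphi_{\lambda,n})\hat{h}$ from the definition that precedes the theorem, together with the convolution-to-product identity $\widehat{f*g}(\lambda,n) = \hat{f}(\lambda,n)\hat{g}(\lambda,n)$ listed after the spherical Fourier transform. This yields
\[
\widehat{T_{\mathfrak{m}}(f*g)}(\lambda,n) = \mathfrak{m}(\varphi_{\lambda,n})\hat{f}(\lambda,n)\hat{g}(\lambda,n) = \widehat{T_{\mathfrak{m}}f}(\lambda,n)\,\hat{g}(\lambda,n) = \widehat{T_{\mathfrak{m}}f * g}(\lambda,n).
\]
Injectivity of the spherical Fourier transform on $L^{1,\#}(\mathbb{H}_q)$ then gives $T_{\mathfrak{m}}(f*g) = T_{\mathfrak{m}}f * g$.

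For part (ii), the same strategy works, but now the two multipliers are applied to different factors. I would transform the left-hand side as
\[
\widehat{T_{\mathfrak{m}_1}f * T_{\mathfrak{m}_2}g}(\lambda,n) = \widehat{T_{\mathfrak{m}_1}f}(\lambda,n)\,\widehat{T_{\mathfrak{m}_2}g}(\lambda,n) = \mathfrak{m}_1(\varphi_{\lambda,n})\mathfrak{m}_2(\varphi_{\lambda,n})\hat{f}(\lambda,n)\hat{g}(\lambda,n),
\]
and the right-hand side as $(\mathfrak{m}_1\mathfrak{m}_2)(\varphi_{\lambda,n})\widehat{f*g}(\lambda,n)$, which matches after using the scalar identity $(\mathfrak{m}_1\mathfrak{m}_2)(\varphi_{\lambda,n}) = \mathfrak{m}_1(\varphi_{\lambda,n})\mathfrak{m}_2(\varphi_{\lambda,n})$ and applying injectivity once more.

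The principal obstacle is not the chain of symbolic identities, which is essentially forced by the multiplier characterization, but the supporting structural fact that the spherical Fourier transform separates points of $L^{1,\#}(\mathbb{H}_q)$; this is precisely where the Gelfand pair hypothesis enters. A secondary technical point, which I would handle by a Fubini argument using $f,g\in L^1$ and boundedness of $\varphi_{\lambda,n}$ and $\mathfrak{m}$, is the interchange of integrations used implicitly both in writing $\widehat{T_{\mathfrak{m}}h} = \mathfrak{m}(\varphi_{\lambda,n})\hat{h}$ and in the convolution-to-product identity; these are legitimate because $\|\pi_\lambda(u,t)\|\le 1$ and the bounded spherical functions satisfy $|\varphi_{\lambda,n}(u,t)|\le 1$, so all integrands are dominated by an integrable majorant.
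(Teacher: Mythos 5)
Your proposal is correct in outline but takes a genuinely different route from the paper. The paper proves part (i) entirely on the ``space side'': it expands $T_{\mathfrak{m}}(f*g)$ using the explicit integral formula \eqref{9.2}, substitutes the convolution integral for $(f*g)(u,t)$, and interchanges the order of integration by Fubini--Tonelli to pull the $g$-integral outside, recognizing what remains as $T_{\mathfrak{m}}f * g$. For part (ii) the paper again writes both sides as iterated integrals of the form $\int \mathfrak{m}_i\hat{f}\varphi_i\,d\nu(\varphi_i)$, but the displayed computation stops after rewriting the convolution as a double integral and never actually reaches $T_{\mathfrak{m}_1\mathfrak{m}_2}(f*g)$. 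You instead work entirely on the transform side, using $\widehat{T_{\mathfrak{m}}h}=\mathfrak{m}(\varphi_{\lambda,n})\hat{h}$ together with $\widehat{f*g}=\hat{f}\hat{g}$, and then recover the pointwise identity from injectivity of the spherical Fourier transform on $L^{1,\#}(\mathbb{H}_q)$. Your route yields a complete and uniform proof of both parts (and in particular actually finishes (ii), which the paper does not), but it rests on a structural input the paper never establishes: injectivity of the spherical transform requires more than commutativity of $L^{1,\#}(\mathbb{H}_q)$ --- you need semisimplicity of that algebra (equivalently, that the bounded spherical functions separate its elements), which is a standard fact for Gelfand pairs but should be cited or proved rather than attributed to commutativity alone. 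The paper's direct Fubini computation for (i) is more elementary and self-contained by comparison, needing only the integrability hypotheses and boundedness of $\varphi_{\lambda,n}$, which you correctly identify as the dominating-majorant input in your closing remarks.
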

\begin{proof}\ \\
\begin{enumerate}
		\item [i.] By \eqref{9.2}, we have;
\small \begin{eqnarray*}
	T_{\mathfrak{m}}(f*g)&=&\int_{\mathbb{H}_q\ltimes K}\mathfrak{m}(\varphi_{\lambda,n}) (f*g)(u,t)\varphi_{\lambda,n}(k\cdot u,t)\varphi_{\lambda,n}(u,t)~d\nu(u)d\nu(t)d\nu(k)\\
&=&	\int_{\mathbb{H}_q\ltimes K}\mathfrak{m}(\varphi_{\lambda,n}) \int_{\mathbb{H_q}}f(v,s)g\big[(-v,-s)\cdot(u,t)\big]\varphi_{\lambda,n}(k\cdot u,t)\varphi_{\lambda,n}(u,t)~d\nu(y)d\nu(s)d\nu(u)d\nu(t)d\nu(k)\\
&=&\int_{\mathbb{H}_q\ltimes K}\mathfrak{m}(\varphi_{\lambda,n}) f(v,s)\varphi_{\lambda,n}(k\cdot u,t)\varphi_{\lambda,n}(u,t)\int_{\mathbb{H_q}}g\big[(-v,-s)\cdot(u,t)\big]~d\nu(y)d\nu(s)d\nu(u)d\nu(t)d\nu(k)\\
&=&\int_{\mathbb{H_q}}\int_{\mathbb{H}_q\ltimes K}\mathfrak{m}(\varphi_{\lambda,n}) f(v,s)\varphi_{\lambda,n}(k\cdot u,t)\varphi_{\lambda,n}(u,t)d\nu(u)d\nu(t)d\nu(k)g\big[(-v,-s)\cdot(u,t)\big]~d\nu(v)d\nu(s)\\
&=&T_{\mathfrak{m}}f*g.
\end{eqnarray*}
		\item[ii.] By \eqref{9.2}, we have;
\small \begin{eqnarray*}
&&\int_{K\ltimes \mathbb{H}_q} \mathfrak{m_1}\hat{f}\varphi_1~d\nu(\varphi_1)*\int_{K\ltimes \mathbb{H}_q} \mathfrak{m_2}\hat{g}\varphi_2~d\nu(\varphi_2)\\
&=&\int_{\mathbb{H_q}}\bigg[\int_{\mathbb{H}_q\ltimes K}\mathfrak{m_1}\hat{f}\varphi_1~d\nu(\varphi_1)\int_{K\ltimes \mathbb{H}_q}\mathfrak{m_2}\hat{g}\varphi_2~d\nu(\varphi_2)\bigg]d\nu(v)d\nu(s)
\end{eqnarray*}
	\end{enumerate}
\end{proof}
Let $\mathfrak{m}:k\rightarrow Aut(\mathbb{H}_q)$ and $\mathfrak{m}\in L^1(\mathbb{H}_q)\cap L^2(\mathbb{H}_q)$ such that $\mathcal{F}^{-1} (m) \in L^{1,k}$ then, $$T_m: L^{2,k}(\mathbb{H}_q\ltimes K)\longrightarrow L^{2,k}(\mathbb{H}_q\ltimes K)$$ is bounded and $$\|T_m\|\le \|\mathcal{F}^{-1} (m)\|_{L^{1,k}(\mathbb{H}_q\ltimes K)}.$$
\begin{rem}
H\"{o}rmander gave a sufficient condition for $L^p$-multipliers which can be extended for Fourier multiplier for $L^p(\mathbb{H}_q)$.
\end{rem}
Now let $a> \frac{Q}{2}$, where $Q=4q+6$ is the degree of homogeneous dimension of $\mathbb{H}_q$ which plays the role of $n$ in the Euclidean settings and such that $\disp{\sup_{\nu >0}\|\nu^a \D^a \mathfrak{m}(\nu)\|\le C_a<\infty}$ and $\disp{(1+2^j|x|)^{-N}}$ is radial for all $N>0$. The condition $a> \frac{Q}{2}$ is sharp, i.e. the condition would fail if $a\le \frac{Q}{2}$. We thus state an equivalence of H\"{o}rmander theorem for $\mathbb{H}_q$.
\begin{thm}[Hormander Multiplier For $\mathbb{H}_q$] \ \\
Let $k=[\frac{n}{2}]+1$. If $\mathfrak{m}$ can be finitely differentiated ($k$-times) away from the origin and for any $\alpha\in \bf{N}^n$ which satisfies $|\alpha|\le k$, we have $$\sup_R R^{|\alpha|-\frac{n}{2}} \left(\int_{\mathbb{H}_q}|\D^\alpha \mathfrak{m}(\nu)|^2\chi_{R<|\nu|<2R}(\nu)d\nu\right)^{\frac{1}{2}}<\infty,$$ then $\mathfrak{m}$ is a Fourier multiplier for $L^p(\mathbb{H}_q)$ for $1<p<\infty$.
\end{thm}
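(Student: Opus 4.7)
The plan is to follow the classical Hörmander–Mihlin strategy, adapted to the stratified setting of $\mathbb{H}_q$ in which the role of the Euclidean dimension is played by the homogeneous dimension $Q=4q+6$. The argument splits into an $L^2$ step, a kernel estimate yielding weak type $(1,1)$, and a Marcinkiewicz interpolation together with duality.

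First I would settle the $L^2$ endpoint. Taking $\alpha=0$ in the hypothesis and letting $R\to\infty$ one obtains $\|\mathfrak{m}\|_\infty<\infty$, and combining this with the Plancherel theorem for the spherical Fourier transform on $\mathbb{H}_q$ (already used in the definition of $T_{\mathfrak{m}}$) gives $\|T_{\mathfrak{m}}f\|_2\le\|\mathfrak{m}\|_\infty\|f\|_2$. Next I would fix a smooth Littlewood–Paley partition of unity $1=\sum_{j\in\mathbb{Z}}\psi_j(\nu)$ with $\psi_j$ supported in the dyadic annulus $2^{j-1}\le|\nu|\le 2^{j+1}$ and write $\mathfrak{m}=\sum_j\mathfrak{m}_j$ with $\mathfrak{m}_j=\psi_j\mathfrak{m}$. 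Using the hypothesis and the chain rule (and the homogeneity of the $\D^\alpha$ under the non-isotropic dilations of $\mathbb{H}_q$), one obtains the uniform bound $\sup_j 2^{j(|\alpha|-Q/2)}\|\D^\alpha\mathfrak{m}_j\|_{L^2}\le C$ for every $|\alpha|\le k=[Q/2]+1$.

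The heart of the proof is then the kernel estimate. Writing $K_j:=\mathcal{F}^{-1}\mathfrak{m}_j$, the Cauchy–Schwarz inequality together with the previous derivative bounds gives decay of the form $|K_j(u,t)|\le C\,2^{jQ}(1+2^j\rho(u,t))^{-N}$ and analogous estimates for left-invariant derivatives, where $\rho$ is the Carnot–Carathéodory (or equivalent homogeneous) norm on $\mathbb{H}_q$. Summing in $j$ and using the mean value inequality along horizontal curves, I would verify the Hörmander regularity condition
\begin{equation*}
\sup_{(v,s)\in\mathbb{H}_q}\int_{\rho(u,t)\ge 2\rho(v,s)}\bigl|K\bigl((u,t)\cdot(v,s)^{-1}\bigr)-K(u,t)\bigr|\,du\,dt\le C,
\end{equation*}
which is exactly the hypothesis required to apply Calderón–Zygmund theory on the space of homogeneous type $(\mathbb{H}_q,\rho,du\,dt)$. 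This yields the weak type $(1,1)$ bound for $T_{\mathfrak{m}}$; Marcinkiewicz interpolation with the $L^2$ bound gives $L^p$ boundedness for $1<p\le 2$, and duality (noting that $\overline{\mathfrak{m}}$ satisfies the same hypothesis as $\mathfrak{m}$) extends it to $2\le p<\infty$.

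The main obstacle I expect is the kernel step: to control $K_j$ and its left-invariant derivatives one must exploit the non-commutative group convolution and the non-isotropic dilation structure correctly, and it is here that the sharp threshold $k=[Q/2]+1$ enters, since the Cauchy–Schwarz loss in converting $L^2$ information on $\mathfrak{m}$ into pointwise information on $K_j$ costs exactly a factor of $2^{jQ/2}$. Everything else — the $L^2$ bound, the Littlewood–Paley decomposition, the Marcinkiewicz interpolation, and the duality — is essentially formal once the kernel regularity has been established, because $\mathbb{H}_q$ is a space of homogeneous type with the commutative convolution algebra $L^{1,\#}(\mathbb{H}_q)$ of $K$-bi-invariant functions, and Theorem~2.1 already provides the abstract correspondence between multipliers and measures in this commutative setting.
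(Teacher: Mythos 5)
Your proposal follows essentially the same route as the paper's proof: a dyadic Littlewood--Paley decomposition of the multiplier, kernel decay estimates of the form $|K_j(x)|\le C\,2^{jQ}(1+2^j|x|)^{-N}$ derived from the $L^2$ derivative bounds, and Calder\'on--Zygmund theory on the homogeneous group to pass to $L^p$ for $1<p<\infty$. Your write-up is in fact somewhat more complete than the paper's --- you make the $L^2$ endpoint, the integral H\"ormander regularity condition, and the Marcinkiewicz interpolation and duality steps explicit, where the paper only asserts a pointwise kernel bound and a cancellation condition --- but the underlying strategy is identical.
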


\begin{proof}\ \\
Since $\disp{\D^\alpha m(R\cdot)(\nu)=R^{|\alpha|}(\D^\alpha m)(R\nu)}$ via the change of variable $\disp{\nu\mapsto R\nu}$ we obtain $\disp{\sup_R \left(\int_{\mathbb{H}_q}|\D^\alpha \mathfrak{m}(R\cdot)(\nu)|^2\chi_{1<|\nu|<2}(\nu)d\nu\right)^{\frac{1}{2}}\le C}$.\\
Now let $\psi \in C_c^\infty(\mathbb{R}^+)$ then $\disp{\D^\alpha (m(2^j\cdot)\psi)(\nu)=\sum_{|\beta|\le|\alpha|}C_{\beta, \alpha}\D^\alpha m(2^j\cdot)\D^{\alpha-\beta}\psi}$ and $\disp{|\D^\alpha \psi|\le C}$ hence $\disp{\sup_{\nu >0}\|\nu^a \D^a \mathfrak{m}(\nu)\| <\infty}$.\\
By Littlewood-Paley Decomposition, let $\psi \in C_c^\infty(\mathbb{R}^+)$ with support in $[\frac{1}{2}, 2]$ be such that \[
\sum_{j \in \mathbb{Z}} \psi(2^{-j} \lambda) = 1, \quad \forall \lambda > 0.
\]
We define \[
T_j f = \psi(2^{-j} \mathcal{L}) f.
\]
such that,
\[
T_m f = \sum_{j \in \mathbb{Z}} m(\mathcal{L}) T_j f;~~\text{where} ~\mathcal{L}~\text{is the sub-Laplacian defined by}~\mathcal{L} = - \sum_{j=1}^{4q} X_j^2.\]
where each $T_j$ is an integral operator with kernel $K_j$ and $m(\mathcal{L}) T_j$ has kernel $K_j^m$. By the smoothness condition of $m$, then rapid decay of the kernel is seen by the estimate \begin{equation}\label{7.2} |K_j^m(x)| \leq C 2^{jQ} (1 + 2^j |x|)^{-N}, \quad \forall N > 0. \end{equation}
\eqref{7.2} implies  $K^m$ satisfies Calder\'on--Zygmund kernel condition:
\[
|K^m(x)| \leq C |x|^{-Q}, \quad x \neq 0,
\]
\[
\int_{|x| > \varepsilon} K^m(x) \, dx = 0.
\]
Therefore, by Calder\'on--Zygmund theory for homogeneous groups, $T_m$ is bounded on $L^p(\mathbb{H}_q)$ for $1 < p < \infty$.\\
Summing over $j$ using square function estimates, we have;
\[
\|T_m f\|{L^p} \lesssim \left\| \left(\sum_j |T_j f|^2 \right)^{1/2} \right\|{L^p} \lesssim \|f\|_{L^p}.\]
\end{proof}

\bibliographystyle{amsplain}
\providecommand{\bysame}{\leavevmode\hbox to3em{\hrulefill}\thinspace}

\end{document}